\newlength{\defbaselineskip}
\newcommand{\setlinespacing}[1]%
           {\setlength{\baselineskip}{#1 \defbaselineskip}}
\theoremstyle{plain}
\newtheorem{thm}{Theorem}[section]
\newtheorem{cor}[thm]{Corollary}
\newtheorem{exm}[thm]{Example}
\theoremstyle{definition}
\newtheorem{defn}[thm]{Definition}
\newtheorem{rem}[thm]{Remark}
\numberwithin{equation}{section}
\begin{document}

\newcommand{\ol }{\overline}
\newcommand{\ul }{\underline }
\newcommand{\ra }{\rightarrow }
\newcommand{\lra }{\longrightarrow }
\newcommand{\ga }{\gamma }
\newcommand{\st }{\stackrel }
\newcommand{\scr }{\scriptsize }

\title{\Large\textbf{On Varietal Capability of Direct Products of Groups and Pair of Groups}}
\author{\textbf{ Hanieh Mirebrahimi\footnote{Correspondence: hanieh${\_}$hmp@yahoo.com} and Behrooz Mashayekhy   
}\\
Department of Pure Mathematics,\\ Center of Excellence in Analysis on Algebraic Structures,\\
Ferdowsi University of Mashhad,\\
P. O. Box 1159-91775, Mashhad, Iran.}
\date{ }
\maketitle

\begin{abstract}
In this paper we give some conditions in which a direct product
of groups is $\mathcal{V}-$capable if and only if each of its
factors is $\mathcal{V}-$capable for some varieties $\mathcal{V}$. Moreover, we give some conditions in
which a direct product of a finite family of pairs of groups is
capable if and only if each of its factors is a capable pair of
groups.
\end{abstract}
2000 {\it Mathematics Subject Classification}: 20E10; 20K25; 20E34; 20D15; 20F18.\\
{\it Key words}: Capable group; Direct product; Variety of
groups; $\mathcal{V}-$capable group, pair of groups, capable pair of groups.\\

\newpage

\section{Introduction and Motivation}
R. Bear $[1]$ initiated an investigation of the question which
conditions a group $G$ must fulfill in order to be the group of
inner automorphisms of a group $E$ that is ($G\cong E/Z(E)$).
Following M. Hall and J. K. Senior $[6]$ such a group $G$ is
called capable. Bear $[1]$ determined all capable groups which are
direct sums of cyclic groups. As P. Hall $[5]$ mentioned,
characterizations of capable groups are important in classifying
groups of prime-power order.

F. R. Beyl, U. Felgner and P. Schmid $[2]$ proved that every group
$G$ possesses a uniquely determined central subgroup $Z^*(G)$
which is minimal subject to being the image in $G$ of the center
of some central extension of $G$. This $Z^*(G)$ is characteristic
in $G$ and is the image of the center of every stem cover of $G$.
Moreover, $Z^*(G)$ is the smallest central subgroup of G  whose
factor group is capable $[2, Corollary\ 2.2]$. Hence $G$ is capable if and only if
$Z^*(G)=1$ $[2, corollary\ 2.3]$. They showed that the class of all capable groups is
closed under the direct products $[2, Proposition\ 6.1]$. Also, they presented a condition
in which the capablity of a direct product of finitely many of
groups implies the capablity of each of the factors $[2, Proposition\ 6.2]$. Moreover,
they proved that if $N$ is a central subgroup of $G$, then
$N\subseteq Z^*(G)$ if and only if the mapping $M(G)\rightarrow
M(G/N)$ is monomorphic $[2, Theorem\ 4.2]$.

Then M. R. Moghadam and S. Kayvanfar $[11]$ generalized the
concept of capability to $\mathcal{V}-$capability for a group
$G$. They introduced the subgroup $(V^*)^*(G)$ which is
associated with the variety $\mathcal{V}$ defined by the set of
laws $V$ and a group $G$ in order to establish a necessary and
sufficient condition under which $G$ can be
$\mathcal{V}-$capable $[11, Corollary\ 2.4]$. They also showed that the class of all
$\mathcal{V}-$capable groups is closed under the direct products $[11, Theorem\ 2.6]$.
Moreover, they exhibited a close relationship between the groups
$\mathcal{V}M(G)$ and $\mathcal{V}M(G/N)$, where $N$ is a normal
subgroup contained in the marginal subgroup of $G$ with respect
to the variety $\mathcal{V}$. Using this relationship, they gave
a necessary and sufficient condition for a group $G$ to be
$\mathcal{V}-$capable $[11, Theorem\ 4.4]$.

In this paper, in section 3, we present some conditions in which
the $\mathcal{V}-$capablity of a direct product of a finitely many
groups implies the $\mathcal{V}-$capablity of each of its factors.

In continue, we study on the capability of a pair of groups. The
theory of capability of groups may be extended to the theory of
pairs of groups. In fact capable pairs are defined in terms of
J.-L. Loday's notion $[8]$ of a relative central extensions. By a
pair of groups we mean a group $G$ and a normal subgroup $N$,
denoted by $(G,N)$. If $M$ is another group on which an action of
$G$ is given, the $G$-center of $M$ is defined to be the subgroup
$$Z(M,G)= \{m\in M|m^g=m, \forall g\in G\}.$$

A relative central extension of the pair $(G,N)$ consists of a
group homomorphism $\sigma:M\rightarrow G$ together with an
action of $G$ on $M$ such that \\
i) $\sigma(M)=N$,\\
ii) $\sigma(m^g)=g^{-1}\sigma(m)g$, for all $g\in G, m\in M$,\\
iii) $m'^{\sigma(m)}=m^{-1}m'm$, for all $m,m'\in M$,\\
iv) $Ker(\sigma)\subseteq Z(M,G)$.

Now a pair of groups $(G,N)$ is said to be capable if it admits
such a relative central extension with $Ker(\sigma)=Z(M,G)$.

In section $4$, we prove that the capability of the pair
$(G_1\times G_2, N_1\times N_2)$ is equivalent to the capability
of both pairs $(G_1, N_1)$ and $(G_2, N_2)$ in some conditions.

\section{Definitions and Preliminaries}

The central subgroup $Z^*(G)$ of $G$ is defined as follows $[2]$:
$$Z^*(G)=\{\phi Z(E)\ |\ (E,\phi)\ is\ a\ central\ extension\ of\ G\}.$$
It is clear that $Z^*(G)$ is a characteristic subgroup of $G$
contained in $Z(G)$.

\begin{thm}$[2]$ (i) A group $G$ is capable if and only if
$Z^*(G)=1$.\\
(ii) Let $N$ be a central subgroup of $G$. Then $N\subseteq
Z^*(G)$ if and only if the natural map $M(G)\rightarrow M(G/N)$ is
monomorphic.\\
(iii) $Z^*(\prod_{i\in I}G_i)\subseteq \prod_{i\in I}Z^*(G_i)$,
and hence if $G_i$'s are capable groups, then $G=\prod_{i\in
I}G_i$ is also capable. \end{thm}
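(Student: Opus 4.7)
My plan is to address the three parts in the order (iii), (i), (ii), reusing constructions as I progress. Throughout I read the defining expression for $Z^*(G)$ as the intersection $\bigcap_{(E,\phi)}\phi(Z(E))$ over all central extensions $\phi\colon E\to G$, so that $Z^*(G)$ is an honest subgroup of $G$. Part (iii) is then purely functorial: given central extensions $\phi_i\colon E_i\to G_i$, the product map $\prod\phi_i\colon\prod E_i\to\prod G_i$ is again a central extension since $\prod\ker\phi_i\subseteq\prod Z(E_i)=Z(\prod E_i)$, and the image of its centre is $\prod_i\phi_i(Z(E_i))$. An element of $Z^*(\prod G_i)$ therefore has $i$-th coordinate in $\phi_i(Z(E_i))$ for every choice of $\phi_i$, hence in $Z^*(G_i)$, giving $Z^*(\prod G_i)\subseteq\prod Z^*(G_i)$; the capability corollary follows at once from (i).

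For (i), the forward direction is essentially tautological: if $G\cong E/Z(E)$ then the canonical projection is a central extension whose image of centres is trivial, so $Z^*(G)=1$. The converse requires exhibiting a single extension realising $Z^*(G)$ rather than merely the intersection. My plan is a pullback argument: the fibre product $E_1\times_G E_2$ of two central extensions is again a central extension whose image of centres sits inside $\phi_1(Z(E_1))\cap\phi_2(Z(E_2))$. Combined with the existence of a stem cover $1\to M(G)\to G^*\to G\to 1$, a universal object controlled by the Schur multiplier, one produces a single central extension whose image of centres equals $Z^*(G)$; if $Z^*(G)=1$, this extension realises $G$ as $G^*/Z(G^*)$, so $G$ is capable.

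Part (ii) is the main obstacle. The natural tool is the Stallings--Stammbach five-term exact sequence attached to $1\to N\to G\to G/N\to 1$: since $N$ is central, $[N,G]=1$, and one obtains
\[
M(G)\longrightarrow M(G/N)\longrightarrow N\longrightarrow G^{\mathrm{ab}}\longrightarrow (G/N)^{\mathrm{ab}}\longrightarrow 0.
\]
The strategy is to show that injectivity of $M(G)\to M(G/N)$ is equivalent to the lifting property that every central extension of $G/N$ pulls back to a central extension of $G$ in which $N$ sits inside the image of the centre, which is precisely the condition $N\subseteq Z^*(G)$. The subtle part, where most of the real work lies, is translating between the cohomological language of $M$ and the geometric language of extensions: this requires the universal-coefficient picture of $M(G/N)$ together with the stem cover from (i), keeping careful track of which extension classes descend along $G\to G/N$ and which are obstructed by the connecting map into $N$.
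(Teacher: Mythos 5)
The paper states this theorem as a citation to Beyl--Felgner--Schmid and supplies no proof of its own, so your sketch can only be measured against the standard argument of $[2]$. Your part (iii) is correct and complete: products of central extensions are central extensions of the product, $Z(\prod_i E_i)=\prod_i Z(E_i)$, and fixing one coordinate while letting the extension there vary (taking identity extensions elsewhere) gives the coordinatewise intersection. The forward implication of (i) is likewise fine. The converse of (i), however, has a real gap: closing the family $\{\phi(Z(E))\}$ under \emph{finite} intersections by fibre products does not produce a single extension realising the full intersection $Z^*(G)$, since the family is infinite and the resulting descending chain of subgroups need not stabilise. The missing ingredient is the key lemma of $[2]$: for a free presentation $1\to R\to F\to G\to 1$, the central extension $F/[R,F]\to G$ already satisfies $\phi\bigl(Z(F/[R,F])\bigr)=Z^*(G)$, because every central extension of $G$ receives a homomorphism from $F/[R,F]$ over $G$ (lift the free generators). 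Merely invoking ``a stem cover'' does not supply this minimality statement, and it is precisely the statement that makes $Z^*(G)$ computable from one extension.

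Part (ii) is where the proposal fails as a proof, and the failure starts with the choice of tool. In the five-term sequence
$M(G)\to M(G/N)\to N\to G^{\mathrm{ab}}\to (G/N)^{\mathrm{ab}}\to 0$,
exactness at $M(G/N)$ identifies the \emph{image} of $M(G)\to M(G/N)$ with the kernel of $N\to G^{\mathrm{ab}}$; it says nothing about the \emph{kernel} of $M(G)\to M(G/N)$, which is what the theorem is about. What is actually needed is the Ganea extension of this sequence for central $N$, namely the extra term $N\otimes G^{\mathrm{ab}}\to M(G)\to M(G/N)$, so that injectivity is equivalent to the vanishing of the Ganea map, together with the identification (via Hopf's formula $M(G)=(R\cap F')/[R,F]$ and the description of $Z^*(G)$ from part (i)) of when that map vanishes --- or else the direct computation of $[2]$ showing $\ker\bigl(M(G)\to M(G/N)\bigr)=(R\cap[S,F])/[R,F]$ where $S$ is the preimage of $N$ in $F$. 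You explicitly defer ``most of the real work,'' and the portion you do set up is not the portion that bears on the claim; as written, (ii) is a plan rather than a proof.
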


In general the above inclusion is proper. The following sufficient
condition forcing equality.

\begin{thm}$[2]$ Let $G=\prod_{i\in I}G_i$. Assume that for $i\neq j$ the maps
$v_i\otimes 1:Z^*(G_i)\otimes G_j/G'_j\rightarrow G_i/G'_i\otimes
G_j/G'_j$ are zero, where $v_i$ is the natural map
$Z^*(G_i)\rightarrow G_i\rightarrow G_i/G'_i$. Then
$Z^*(G)=\prod_{i\in I}Z^*(G_i)$.
\end{thm}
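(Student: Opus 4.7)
The plan is to exploit the reverse inclusion only. Theorem 2.1(iii) already gives $Z^*(G)\subseteq\prod_i Z^*(G_i)$, so setting $N=\prod_{i\in I} Z^*(G_i)$ it suffices to prove $N\subseteq Z^*(G)$. By Theorem 2.1(ii), this is equivalent to showing that the natural homomorphism $M(G)\to M(G/N)$ is a monomorphism. I will first carry this out for $|I|=2$ and then extend to arbitrary finite $I$ by induction (and, if desired, to infinite $I$ by a direct limit argument, since both sides commute with the appropriate colimits).

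For the pair case $G=G_1\times G_2$, the central tool is the classical Schur--K\"unneth decomposition
\[
M(G_1\times G_2)\;\cong\; M(G_1)\oplus M(G_2)\oplus\bigl(G_1/G_1'\otimes G_2/G_2'\bigr),
\]
which is natural with respect to product homomorphisms. Applying naturality to the projection $\pi_1\times\pi_2:G_1\times G_2\to (G_1/Z^*(G_1))\times(G_2/Z^*(G_2))$, the induced map on multipliers splits as the direct sum of $M(\pi_i):M(G_i)\to M(G_i/Z^*(G_i))$ for $i=1,2$ and the tensor map $\pi_1^{ab}\otimes\pi_2^{ab}$ on $G_1/G_1'\otimes G_2/G_2'$.

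The first two summands are injective: this is precisely Theorem 2.1(ii) applied to the inclusion $Z^*(G_i)\subseteq Z^*(G_i)$. The heart of the proof, and the only place where the hypothesis is used, lies in the tensor summand. For abelian groups, the kernel of $A\otimes B\to (A/A_0)\otimes(B/B_0)$ is the sum of the images of $A_0\otimes B$ and $A\otimes B_0$ in $A\otimes B$. In our situation $A_0=\ker\pi_1^{ab}=v_1(Z^*(G_1))$ and $B_0=v_2(Z^*(G_2))$, so the kernel of the tensor map coincides with the images of $v_1\otimes 1:Z^*(G_1)\otimes G_2/G_2'\to G_1/G_1'\otimes G_2/G_2'$ and $1\otimes v_2:G_1/G_1'\otimes Z^*(G_2)\to G_1/G_1'\otimes G_2/G_2'$. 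By hypothesis both of these are the zero map; hence the tensor summand is injective as well. Combining the three injections yields that $M(G)\to M(G/N)$ is a monomorphism, and the result follows.

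The main obstacle I anticipate is bookkeeping rather than conceptual: ensuring that the splitting of the map on multipliers is compatible with the projection $\pi_1\times\pi_2$ (this is the naturality of the K\"unneth formula), and correctly identifying the kernel of the induced tensor map in the precise form in which the hypothesis is stated. The extension from two factors to arbitrarily many finite factors is routine by induction, provided one checks that the inductive hypothesis $v_i\otimes 1=0$ survives when $G_j$ is replaced by a direct product $\prod_{k\neq i}G_k$; this follows from the fact that $(\prod_k G_k)/(\prod_k G_k)'\cong\bigoplus_k G_k/G_k'$ together with the bilinearity of the tensor product.
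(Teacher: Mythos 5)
The paper states this result as a citation of [2] and gives no proof of its own, so there is nothing internal to compare against; judged on its merits, your argument is correct and is essentially the proof of [2, Proposition 6.2]: reduce via Theorem 2.1(ii)--(iii) to injectivity of $M(G)\to M(G/N)$ with $N=\prod_i Z^*(G_i)$, split that map along the natural K\"unneth decomposition $M(G_1\times G_2)\cong M(G_1)\oplus M(G_2)\oplus (G_1^{ab}\otimes G_2^{ab})$, and kill the kernel of the tensor summand using the hypothesis that the maps $v_i\otimes 1$ vanish. The only loose end is your parenthetical extension to infinite $I$: the direct-limit argument works for the restricted direct product (direct sum), since $M$ and monomorphisms pass to directed colimits, but not obviously for the unrestricted product; for the finite products actually used in this paper the induction you describe suffices.
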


As a consequence of the above theorem, if $\{G_i\ |\ i\in I\}$ is
a family of finite capable groups with
$(|A_i^{ab}|,|B_j^{ab}|)=1$, for all $i\neq j$, then
$G=\prod_{i\in I}G_i$ is capable if and only if any $G_i$ is
capable.

Let $\mathcal{V}$ be a variety of groups defined by the set of
laws $V$. A group $G$ is said to be $\mathcal{V}-$capable if
there exists a group $E$ such that $G\cong E/V^*(E)$. If
$\psi:E\rightarrow G$ is a surjective homomorphism with $ker
\psi\subseteq V^*(E)$, then the intersection of all subgroups of
the form $\psi(V^*(E))$ is denoted by $(V^*)^*(G)$. It is obvious
that $(V^*)^*(G)$ is a characteristic subgroup of $G$ contained
in $V^*(G)$. If $\mathcal{V}$ is the variety of abelian groups,
then the subgroup $(V^*)^*(G)$ is the same as $Z^*(G)$ and in
this case $\mathcal{V}-$capablity is equal to capablity $[11]$.

\begin{thm} $[11]$ (i) A group $G$ is $\mathcal{V}-$capable if and only if
$(V^*)^*(G)=1$.\\
(ii) $(V^*)^*(\prod_{i\in I}^{}G_i)\subseteq \prod_{i\in
I}(V^*)^*(G_i).$
\end{thm}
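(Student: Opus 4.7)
My plan for part (i) splits along the two implications. The forward direction is essentially by definition: if $G$ is $\mathcal{V}$-capable, say $G \cong E/V^*(E)$, then the quotient map $\pi \colon E \to G$ satisfies $\ker \pi = V^*(E)$, hence appears among the maps used to define $(V^*)^*(G)$, and $\pi(V^*(E)) = 1$ forces the whole intersection to collapse.

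The backward direction is the substantive one, and I would approach it by a fiber-product construction mimicking the universal central extension used in the abelian (capable) case of Theorem 2.1. Choose a set-sized family $\{(E_\alpha, \psi_\alpha)\}_{\alpha \in A}$ of surjections $\psi_\alpha \colon E_\alpha \to G$ with $\ker \psi_\alpha \subseteq V^*(E_\alpha)$ whose images $\psi_\alpha(V^*(E_\alpha))$ realize $(V^*)^*(G) = 1$ as their intersection; such a family exists because the images are subgroups of the fixed group $G$ and thus form a set. Let $E = \{(e_\alpha) \in \prod_\alpha E_\alpha : \psi_\alpha(e_\alpha) \text{ is independent of } \alpha\}$, with $\psi \colon E \to G$ the obvious map. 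I would verify in order: $\psi$ is surjective; the coordinate projections $\pi_\alpha \colon E \to E_\alpha$ are surjective; $\ker \psi$ embeds as $\prod_\alpha \ker \psi_\alpha$ and sits inside $V^*(E)$ by a coordinate-wise check of the marginal identity; and finally $\psi(V^*(E)) \subseteq \psi_\alpha(V^*(E_\alpha))$ for every $\alpha$, using that surjections carry marginal subgroups into marginal subgroups. The intersection hypothesis then gives $\psi(V^*(E)) = 1$, i.e.\ $V^*(E) \subseteq \ker \psi$; combined with the reverse inclusion this yields $\ker \psi = V^*(E)$ and $G \cong E/V^*(E)$.

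For part (ii), the strategy is to lift an arbitrary admissible cover of a single factor $G_{i_0}$ to an admissible cover of the whole product. Fix $z = (z_i) \in (V^*)^*(\prod_i G_i)$ and any surjection $\psi \colon E \to G_{i_0}$ with $\ker \psi \subseteq V^*(E)$. Form $\tilde\psi = \psi \times \mathrm{id}$ on $E \times \prod_{i \neq i_0} G_i$; its kernel $\ker \psi \times 1$ lies in $V^*(E) \times 1 \subseteq V^*(E \times \prod_{i \neq i_0} G_i)$, so $\tilde\psi$ qualifies in the intersection defining $(V^*)^*(\prod_i G_i)$. Hence $z \in \tilde\psi\bigl(V^*(E \times \prod_{i \neq i_0} G_i)\bigr)$, and projecting onto the $i_0$-coordinate together with the surjection-image property of $V^*$ places $z_{i_0}$ in $\psi(V^*(E))$. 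Since $\psi$ was arbitrary, $z_{i_0} \in (V^*)^*(G_{i_0})$.

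The step I expect to be the main obstacle is verifying, in the backward direction of (i), that an element of the fiber product $E$ which lies coordinate-wise in the factors' marginal subgroups actually sits in the marginal subgroup of $E$ itself. This reduces to applying each word $v \in V$ to an arbitrary tuple in $E$ and reading off the result coordinate by coordinate, but the indexing is bureaucratic and deserves a careful write-up. Everything else is essentially formal once the correct universal object has been built, and the set-theoretic subtlety of replacing the class of all admissible pairs by a cofinal set is the only other point that needs explicit justification.
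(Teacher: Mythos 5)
Your argument is correct. Note that the paper itself offers no proof of this statement --- it is quoted as a preliminary from reference $[11]$ (Moghaddam--Kayvanfar) --- so there is nothing internal to compare against; but your two-step scheme is exactly the standard one: the pullback (fibre product) of a set-sized cofinal family of admissible surjections realizes $G$ as $E/V^*(E)$ when $(V^*)^*(G)=1$, and the product trick $\psi\times\mathrm{id}$ together with the facts that surjections carry marginal subgroups into marginal subgroups and that $V^*(E)\times 1\subseteq V^*(E\times H)$ gives part (ii). The coordinate-wise verification that $\prod_\alpha\ker\psi_\alpha$ is marginal in the fibre product, which you flag as the delicate step, does go through exactly as you describe, since each word identity is checked in each coordinate separately.
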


As a consequence, if the $G_i$'s are $\mathcal{V}-$capable groups,
then $G=\prod_{i\in I}^{}G_i$ is also $\mathcal{V}-$capable.

Note that, in the above theorem, the equality does not hold in
general (see Example $3.5$).

\begin{thm} $[11]$ Let $N$ be a normal subgroup contained in the marginal subgroup
of $G$, $V^*(G)$. Then $N\subseteq (V^*)^*(G)$ if and only if the
homomorphism induced by the natural map
${\mathcal{V}}M(G)\rightarrow {\mathcal{V}}M(G/N)$ is a
monomorphism.
\end{thm}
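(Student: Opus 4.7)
The plan is to prove both directions by working with a free presentation of $G$ and the varietal Hopf-type description of $\mathcal{V}M$. Let $1 \to R \to F \to G \to 1$ be a free presentation of $G$, and let $S$ denote the preimage of $N$ in $F$, so $S/R \cong N$ and $F/S \cong G/N$. By the standard Baer-invariant formula,
$$\mathcal{V}M(G) \cong \frac{R \cap V(F)}{[RV^*F]}, \qquad \mathcal{V}M(G/N) \cong \frac{S \cap V(F)}{[SV^*F]},$$
where $[RV^*F]$ and $[SV^*F]$ denote the Baer-invariant commutator subgroups associated with the variety $\mathcal{V}$. The natural map $\mathcal{V}M(G) \to \mathcal{V}M(G/N)$ is induced by the inclusion $R \cap V(F) \hookrightarrow S \cap V(F)$, so its kernel equals $(R \cap V(F) \cap [SV^*F])/[RV^*F]$. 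The hypothesis $N \subseteq V^*(G)$ translates on $F$ to $[SV^*F] \subseteq R$, and thus the kernel simplifies to $([SV^*F] \cap V(F))/[RV^*F]$.

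For the direction $(\Rightarrow)$, assume $N \subseteq (V^*)^*(G)$. The Baer-invariant cover $F/[RV^*F] \twoheadrightarrow G$ is a surjection whose kernel lies in its own marginal subgroup, and hence it is one of the surjections used in the definition of $(V^*)^*(G)$. By the defining property of $(V^*)^*(G)$, the subgroup $N$ must lie inside the image of the marginal subgroup of this cover. Translating this condition back to the level of $F$ forces $[SV^*F] \cap V(F) \subseteq [RV^*F]$, which is precisely the injectivity of the multiplier map.

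For the direction $(\Leftarrow)$, assume the induced map $\mathcal{V}M(G) \to \mathcal{V}M(G/N)$ is monomorphic. Given any surjection $\psi : E \to G$ with $\ker\psi \subseteq V^*(E)$, I would compose with the quotient $G \to G/N$ and exploit the injectivity hypothesis, together with the naturality of the Baer invariant, to show that the composed surjection $E \to G/N$ still has its kernel contained in $V^*(E)$. This forces $N \subseteq \psi(V^*(E))$, and intersecting over all such $\psi$ gives $N \subseteq (V^*)^*(G)$. A clean packaging of this step is to invoke Theorem 2.3(i) applied to the group $G/N$: the injectivity of the multiplier map together with naturality allows one to recognize $\psi$ as a lift of some $\mathcal{V}$-central extension of $G/N$, so $N$ is visible inside $\psi(V^*(E))$.

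The main obstacle I anticipate is the careful manipulation of the Baer-invariant commutator $[SV^*F]$, whose construction is word-wise relative to the laws $V$ and is considerably more delicate than the ordinary commutator $[S,F]$; many identities that are routine for the classical Schur multiplier, including the translation of $N \subseteq V^*(G)$ into $[SV^*F] \subseteq R$, require an additional verification law-by-law. A related technical point is ensuring that the natural maps between Baer invariants are indeed induced by the free-group level inclusions as claimed, which must be checked to be independent of the choice of presentation.
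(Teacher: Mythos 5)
The paper itself gives no proof of this statement --- it is quoted from [11] and is the varietal analogue of [2, Theorem 4.2] --- so your attempt can only be judged against the standard argument. Your setup is the right one, and the forward direction is essentially complete: with $1\to R\to F\to G\to 1$ a free presentation, $S$ the preimage of $N$, and the Hopf-type formulae $\mathcal{V}M(G)\cong (R\cap V(F))/[RV^*F]$, $\mathcal{V}M(G/N)\cong (S\cap V(F))/[SV^*F]$, the translation of $N\subseteq V^*(G)$ into $[SV^*F]\subseteq R$ is correct, and since $[SV^*F]\subseteq V(F)$ always holds, the kernel of the induced map is exactly $[SV^*F]/[RV^*F]$. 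Moreover $F/[RV^*F]\to G$ is itself a marginal extension, so $N\subseteq (V^*)^*(G)$ forces $S\subseteq MR$ where $M$ is the preimage of $V^*(F/[RV^*F])$, whence $[SV^*F]\subseteq [M\,V^*F][R\,V^*F]=[RV^*F]$, which is injectivity.

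The genuine gap is in the converse. You must show that $[SV^*F]\subseteq[RV^*F]$ forces $N\subseteq\psi(V^*(E))$ for \emph{every} surjection $\psi:E\to G$ with $\ker\psi\subseteq V^*(E)$, not merely for the one manufactured from your chosen presentation; your text only records the intention (``I would \dots exploit the injectivity hypothesis \dots''), and the proposed shortcut via Theorem 2.3(i) is not an argument, since that theorem characterises $\mathcal{V}$-capability of $G/N$ and says nothing about an arbitrary marginal extension of $G$. The missing mechanism is the projectivity of $F$: lift $F\to G$ through $\psi$ to a homomorphism $\tau:F\to E$, so that $E=\tau(F)\ker\psi$, $\tau(R)\subseteq\ker\psi\subseteq V^*(E)$, and $\psi^{-1}(N)=\tau(S)\ker\psi$. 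Because marginal factors may be discarded from the marginal commutator, one gets
\begin{equation*}
[\psi^{-1}(N)\,V^*\,E]=[\tau(S)\,V^*\,\tau(F)]=\tau([S\,V^*F])\subseteq\tau([R\,V^*F])=[\tau(R)\,V^*\,\tau(F)]\subseteq[\ker\psi\,V^*\,E]=1,
\end{equation*}
so $\psi^{-1}(N)\subseteq V^*(E)$ and hence $N=\psi(\psi^{-1}(N))\subseteq\psi(V^*(E))$. Equivalently, one first proves that $(V^*)^*(G)$ equals the image of $V^*(F/[RV^*F])$, i.e.\ that the intersection in its definition is attained at the covering built from any free presentation; this independence-of-presentation lemma is precisely the content your sketch is missing, and without it the converse does not close. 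The technical worries you list at the end (law-by-law verification of $[S\,V^*F]\subseteq R$, well-definedness of the induced maps) are real but routine; the lifting step is the one that carries the weight.
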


\section{Capability of a Direct Product of Groups}

In this section we verify the equation $(V^*)^*(A\times
B)=(V^*)^*(A)\times (V^*)^*(B)$ for some famous varieties.

First, we note that in general, for an arbitrary variety of groups
$\mathcal{V}$, and groups $A$ and $B$, ${\mathcal{V}}M(A\times
B)\cong {\mathcal{V}}M(A)\times {\mathcal{V}}M(B)\times T$, where
$T$ is an abelian group $[10]$. But for some particular varieties,
the group $T$ is trivial with some conditions. For instance, some
famous varieties as variety of abelian groups $[10]$, variety of
nilpotent groups $[4]$, and some varieties of polynilpotent groups
$[7]$ have the property that: {\it for any two groups $A$ and $B$
with $(|A^{ab}|,|B^{ab}|)=1$ the isomorphism
${\mathcal{V}}M(A\times B)\cong {\mathcal{V}}M(A)\times
{\mathcal{V}}M(B)$ $(*)$ holds.}

Now, Suppose that ${\mathcal{V}}$ is a variety, $A$ and $B$ are
two groups with the property $${\mathcal{V}}M(A\times B)\cong
{\mathcal{V}}M(A)\times {\mathcal{V}}M(B).$$ By Theorem $2.4$, we
have the following monomorphism
$${\mathcal{V}}M(A)\times {\mathcal{V}}M(B)\hookrightarrow{\mathcal{V}}M(\frac{A}{(V^*)^*(A)})\times
{\mathcal{V}}M(\frac{B}{(V^*)^*(B)}).$$
 Moreover, we have the following inclusion
$${\mathcal{V}}M(\frac{A}{(V^*)^*(A)})\times
{\mathcal{V}}M(\frac{B}{(V^*)^*(B)})\hookrightarrow
{\mathcal{V}}M(\frac{A}{(V^*)^*(A)}\times \frac{B}{(V^*)^*(B)})$$
Finally, we get the monomorphism
$${\mathcal{V}}M(A\times
B)\hookrightarrow{\mathcal{V}}M(\frac{A\times B}{(V^*)^*(A)\times
(V^*)^*(B)}).$$ Thus, by Theorem $2.4$, we conclude that
$$(V^*)^*(A)\times (V^*)^*(B)\leq (V^*)^*(A\times B).$$ This note
leads us to
 our main result.

\begin{thm} Let $\mathcal{V}$ be a variety, $A$ and $B$ be two
groups with ${\mathcal{V}}M({A\times B})\cong
{\mathcal{V}}M({A})\times{\mathcal{V}}M({B})$, then
$(V^*)^*(A\times B)=(V^*)^*(A)\times (V^*)^*(B)$. Consequently
$A\times B$ is $\mathcal{V}$-capable if and only if $A$ and $B$
are both $\mathcal{V}$-capable.
\end{thm}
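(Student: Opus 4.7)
The plan is to establish the equality by proving the two inclusions separately and then reading off the capability biconditional from Theorem 2.3(i). Observe that the inclusion $(V^*)^*(A)\times (V^*)^*(B)\subseteq (V^*)^*(A\times B)$ is essentially the content of the discussion preceding the theorem statement: starting from the hypothesis $\mathcal{V}M(A\times B)\cong \mathcal{V}M(A)\times \mathcal{V}M(B)$, one composes the monomorphism provided by Theorem 2.4 applied independently to $A$ and $B$ with the natural embedding $\mathcal{V}M(A/(V^*)^*(A))\times \mathcal{V}M(B/(V^*)^*(B))\hookrightarrow \mathcal{V}M\bigl((A/(V^*)^*(A))\times (B/(V^*)^*(B))\bigr)$, and then applies Theorem 2.4 in the converse direction to the normal subgroup $(V^*)^*(A)\times (V^*)^*(B)$ of $A\times B$ (which lies in the marginal subgroup $V^*(A\times B)=V^*(A)\times V^*(B)$, as required). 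I would simply write out this chain carefully in the proof.

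For the reverse inclusion $(V^*)^*(A\times B)\subseteq (V^*)^*(A)\times (V^*)^*(B)$, I would invoke Theorem 2.3(ii) with the index set $I=\{1,2\}$, which gives this containment with no further hypothesis on the Schur-type multiplier $\mathcal{V}M$. Combining the two inclusions yields the stated equality. The ``Consequently'' clause then follows immediately from Theorem 2.3(i): $A\times B$ is $\mathcal{V}$-capable iff $(V^*)^*(A\times B)=1$ iff $(V^*)^*(A)\times (V^*)^*(B)=1$ iff $(V^*)^*(A)=(V^*)^*(B)=1$ iff both $A$ and $B$ are $\mathcal{V}$-capable.

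In truth, the proof itself presents no serious obstacle once Theorems 2.3 and 2.4 are in hand and the multiplier-decomposition hypothesis is granted; the argument is essentially formal bookkeeping, and the main care needed is just to check that the normal subgroup $(V^*)^*(A)\times(V^*)^*(B)$ really lies inside the marginal subgroup of $A\times B$ so that Theorem 2.4 is applicable to it. The genuine difficulty is hidden elsewhere, namely in verifying the hypothesis $\mathcal{V}M(A\times B)\cong \mathcal{V}M(A)\times \mathcal{V}M(B)$ for concrete varieties; as the authors note, this is classical for the variety of abelian groups, nilpotent groups, and certain polynilpotent varieties under a coprime-abelianization assumption, and it is precisely these cases that give the theorem its teeth.
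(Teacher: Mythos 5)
Your proposal is correct and follows essentially the same route as the paper: the inclusion $(V^*)^*(A)\times (V^*)^*(B)\subseteq (V^*)^*(A\times B)$ via the chain of monomorphisms from Theorem 2.4 and the direct-factor embedding of multipliers (which is exactly the note the authors place just before the theorem), the reverse inclusion from Theorem 2.3(ii), and the capability biconditional from Theorem 2.3(i). Your added remark that one must check $(V^*)^*(A)\times(V^*)^*(B)\subseteq V^*(A\times B)$ before applying Theorem 2.4 is a point the paper leaves implicit, and it is correct.
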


\begin{rem}
We recall that above property holds for some famous varieties as
variety of abelian groups and variety of nilpotent groups, where
$(|A_{ab}|,|B_{ab}|)=1$ $([4, 12])$. Thus by theorem $3.1$ for a
family of groups $\{A_i\ |\ 1\leq i\leq n\}$ whose
abelianizations have mutually coprime orders, $\prod_{i=1}^n A_i$
is capable ($\mathcal{N}_c$-capable) if and only if every $A_i$
is capable ($\mathcal{N}_c$-capable). Also, we note that
$\mathcal{N}_c$-capability of a group implies its
$\mathcal{N}_{c-1}$-capability, for any $c\geq 2$.
\end{rem}

Following, we have similar conclusion for variety of polynilpotent
groups in some senses.

\begin{cor} Let $\{A_i\ |\ 1\leq i\leq n\}$ be a family of
groups whose abelianizations have mutually coprime orders.  If
$\prod_{i=1}^n A_i$ is nilpotent of class at most $c_1$, then it
is $\mathcal{N}_{c_1,\cdots,c_s}$-capable if and only if every
$A_i$ is $\mathcal{N}_{c_1,\cdots,c_s}$-capable.
\end{cor}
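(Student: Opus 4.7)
The plan is to reduce the statement to Theorem 3.1 applied to the polynilpotent variety $\mathcal{V} = \mathcal{N}_{c_1,\dots,c_s}$, and then iterate over the factors. I would proceed by induction on $n$, the case $n=1$ being trivial. For the inductive step, write $G = A_1 \times B$ with $B = \prod_{i=2}^{n} A_i$. Because the $|A_i^{ab}|$ are mutually coprime, $|A_1^{ab}|$ and $|B^{ab}| = \prod_{i\geq 2}|A_i^{ab}|$ are coprime as well. Also, as a direct factor of a group that is nilpotent of class at most $c_1$, each of $A_1$ and $B$ is itself nilpotent of class at most $c_1$, so the inductive hypothesis applies to $B$.

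The crucial step is to establish the multiplier isomorphism
$$\mathcal{N}_{c_1,\dots,c_s}M(A_1\times B) \;\cong\; \mathcal{N}_{c_1,\dots,c_s}M(A_1)\times \mathcal{N}_{c_1,\dots,c_s}M(B).$$
This is exactly the property $(*)$ advertised before Theorem 3.1, which by the result of $[7]$ is known to hold for polynilpotent varieties once the abelianizations are coprime, and it is here that the extra assumption of nilpotency of class at most $c_1$ comes in: it ensures that the pair $(A_1,B)$ lies in the hypotheses of the multiplier decomposition theorem in $[7]$ for the variety $\mathcal{N}_{c_1,\dots,c_s}$. I expect this verification to be the main obstacle, since everything else in the argument is formal; in particular one must check that being nilpotent of class at most $c_1$ passes to the factors (easy) and that the cited decomposition from $[7]$ really does apply to the polynilpotent multiplier of $A_1 \times B$ under these combined hypotheses.

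Granting this multiplier isomorphism, Theorem 3.1 applied to $\mathcal{V} = \mathcal{N}_{c_1,\dots,c_s}$ yields
$$(V^*)^*(A_1\times B) \;=\; (V^*)^*(A_1)\times (V^*)^*(B).$$
Combining this with the inductive identity $(V^*)^*(B) = \prod_{i=2}^{n}(V^*)^*(A_i)$, which follows from applying the same argument iteratively, gives $(V^*)^*(G) = \prod_{i=1}^{n}(V^*)^*(A_i)$. By Theorem 2.3(i), $G$ is $\mathcal{N}_{c_1,\dots,c_s}$-capable if and only if $(V^*)^*(G) = 1$, and this happens exactly when each $(V^*)^*(A_i) = 1$, i.e.\ when every $A_i$ is $\mathcal{N}_{c_1,\dots,c_s}$-capable, completing the proof.
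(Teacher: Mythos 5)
Your argument is correct and follows essentially the paper's own route: both reduce the corollary to Theorem 3.1 by establishing the decomposition $\mathcal{N}_{c_1,\dots,c_s}M(\prod A_i)\cong \prod \mathcal{N}_{c_1,\dots,c_s}M(A_i)$, citing $[7]$ for that decomposition under the coprimality and nilpotency-of-class-at-most-$c_1$ hypotheses, and you correctly identify this multiplier isomorphism as the one nontrivial input. The only difference is cosmetic: the paper makes the mechanism explicit by writing the polynilpotent multiplier as an iterated nilpotent multiplier via $[7,\ \mathrm{Lemma}\ 3.9]$ (which is exactly where the class-$\leq c_1$ hypothesis enters) and then invoking $[4,\ \mathrm{Proposition}\ 3]$ for the nilpotent-variety case, whereas you cite the decomposition as a black box and organize the reduction as an induction on the number of factors.
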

\begin{proof}
 First, for the
variety of polynilpotent groups with above hypothesis, there
exists the following isomorphism $[7, Lemma\ 3.9]$
$$\mathcal{N}_{c_1,\cdots,c_s}(\prod_{i=1}^n A_i)\cong
(\mathcal{N}_{c_{s}}M(\cdots\mathcal{N}_{c_{2}}M(\mathcal{N}_{c_{1}}M(\prod_{i=1}^n
A_i)))\cdots).$$ Also we note to the property of variety of
nilpotent groups that $\mathcal{N}_cM(\prod_{i=1}^n A_i)\cong
\prod_{i=1}^n \mathcal{N}_cM(A_i)$, for above family of groups 
$[4, Proposiyion\ 3]$. Hence by Theorem $3.1$ the result holds. 
\end{proof}

\begin{exm} If $\{A_i|1\leq i\leq n\}$ is a family of perfect groups, then
$\prod_{i=1}^n A_i$ is $\mathcal{V}$-capable if and only if each
$A_i$ is $\mathcal{V}$-capable, where $\mathcal{V}$ may be each
of these three varieties, variety of abelian groups, variety of
nilpotent groups, or variety of polynilpotent groups.
\end{exm}

Note that Ellis gave a similar result for $\mathcal{N}_c$-capability of a
direct product of groups $[4,\ Theorem 2]$ with another method.

\begin{exm}
i) Let $G\cong
\mathbb{Z}_{n_1}\oplus\mathbb{Z}_{n_2}\oplus\cdots\oplus\mathbb{Z}_{n_k}$,
where $k\geq 3$, $n_{i+1}|n_{i}$ for all $1\leq i\leq k-1$ and
$n_1=n_2=n_3$. Then by $[9, lamma\ 3.4]$ $G$ is
$\mathcal{N}_{c_1,\cdots,c_t}$-capable if $t\geq 2$ and $c_1=1$; but by $[9, lamma\ 3.3]$ no one of its direct summands, $\mathbb{Z}_{n_i}$, $1\leq i\leq k$, is $\mathcal{N}_{c_1,\cdots,c_t}$-capable. This shows that we can not omit the condition of being mutually coprime orders for abelianizations of the family of groups $\{A_i\ |\ 1\leq i\leq n\}$ in corollary $3.3$.\\
ii) Let $G\cong
\mathbb{Z}_{n_1}\oplus\mathbb{Z}_{n_2}\oplus\cdots\oplus\mathbb{Z}_{n_k}$,
where $k\geq 2$, $n_{i+1}|n_{i}$ for all $1\leq i\leq k-1$ and
$n_1=n_2$. Then by $[9, lamma\ 3.7]$ $G$ is
$\mathcal{N}_{c_1,\cdots,c_t}$-capable if $t=1$ or $c_1\geq 2$; but no one of its direct summands, $\mathbb{Z}_{n_i}$, $1\leq i\leq k$, is $\mathcal{N}_{c_1,\cdots,c_t}$-capable. This example also shows that one can not omit the condition of being mutually coprime orders for abelianizations of the family of groups $\{A_i\ |\ 1\leq i\leq n\}$ in corollary $3.3$ for the variety of nilpotent groups, $\mathcal{N}_{c}$ or the variety of polynilpotent groups $\mathcal{N}_{c_1,\cdots,c_t}$ where $c_1\geq 2$.\\
iii) Put $A\cong \mathbb{Z}_{n}\oplus\mathbb{Z}_{n}\cong B$. Then $(A^{ab},B^{ab})\neq 1$ and $A\oplus B$ is $\mathcal{N}_{c_1,\cdots,c_t}$-capable where $t=1$ or $c_1\geq 2$. Also its direct summands, $A$ and $B$, are $\mathcal{N}_{c_1,\cdots,c_t}$-capable. This shows that the condition of being mutually coprime orders for abelianizations of the the family $\{A_i\ |\ 1\leq i\leq n\}$ is not necessary condition for transferring the varietal capability of a direct product to its factors.
\end{exm}

\section{Capability of a Direct Product of Pair of Groups}

In order to study the capability of a  pair of groups $(G,N)$,
Ellis $[3]$ introduced a subgroup $Z^{\wedge}_{G}(N)$ with the
property that the pair is capable if and only if
$Z^{\wedge}_{G}(N)=1$. To define this subgroup, we need  to
recall the definition of exterior product from $[3]$ as follows.

\begin{defn} Let $N$ and $P$ be arbitrary subgroups of $G$. The
exterior product $P\wedge N$ is the group generated by symbols
$p\wedge n$, for $p\in P$, $n\in N$, subject to the relations\\
1) $pp'\wedge n=(p'^{p}\wedge n^p)(p\wedge n)$,\\
2) $p\wedge nn'=(p\wedge n)(p^n\wedge n'^n)$,\\
3) $x\wedge x=1$, \\
for $x\in P\cap N$, $n, n'\in N$, $p, p'\in P$.
\end{defn}

For a group $G$ and normal subgroups $N$ and $P$, the exterior
P-center of $N$ is denoted by $Z^{\wedge}_{P}(N)$, and is defined
to be
$$\{n\in N|1=p\wedge n\in P\wedge N,\ for\ all p\in P \}.$$
Ellis $[3]$ proved that the pair $(G,N)$ is capable if and only if
$Z^{\wedge}_{G}(N)=1$. In $[12,\ Corollary\ 5.3]$ it is proved
that if $(G,N)$ is a pair of abelian groups, then
$Z^{\wedge}_{G}(N)=Z^*(G)\cap N$. By a similar proof, we have the
result without the condition of abelianess. Now, we can give the
following result about the capability of the direct product of
pair of groups.

\begin{thm} (it??) Suppose that $(G_1, N_1)$ and $(G_2, N_2)$ are two pair of groups with
$(|(G_{1})^{ab}|,|(G_{2})^{ab}|)=1$, then
$$Z^{\wedge}_{{G_1\times G_2}}(N_1\times
N_2)=Z^{\wedge}_{{G_1}}(N_1)\times Z^{\wedge}_{{G_2}}(N_2).$$
Consequently the capability of $(G_1\times G_2, N_1\times N_2)$ is
equivalent to the capability of both pairs $(G_1, N_1)$ and
$(G_2, N_2)$.
\end{thm}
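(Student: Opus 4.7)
My plan is to combine three ingredients already present in the paper: (i) the identity $Z^{\wedge}_G(N) = Z^*(G) \cap N$ for general pairs, asserted in the paragraph preceding the theorem; (ii) the splitting $Z^*(G_1 \times G_2) = Z^*(G_1) \times Z^*(G_2)$ obtained by applying Theorem $2.2$ under the coprimality hypothesis; and (iii) the elementary fact that intersection distributes over a direct product. With these in hand the equality is essentially a bookkeeping calculation.

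First I would rewrite both sides via the general identity $Z^{\wedge}_G(N) = Z^*(G) \cap N$, so the left hand side becomes $Z^*(G_1 \times G_2) \cap (N_1 \times N_2)$ and the right hand side becomes $(Z^*(G_1) \cap N_1) \times (Z^*(G_2) \cap N_2)$. Next I would invoke Theorem $2.2$ to replace $Z^*(G_1 \times G_2)$ by $Z^*(G_1) \times Z^*(G_2)$. To do so I must verify its hypothesis for $i \neq j$: the image of $v_i \otimes 1 : Z^*(G_i) \otimes G_j/G'_j \to G_i/G'_i \otimes G_j/G'_j$ lies in $v_i(Z^*(G_i)) \otimes G_j/G'_j$, and $v_i(Z^*(G_i))$ is a subgroup of the finite group $G_i/G'_i$, so its exponent divides $|G_i^{ab}|$; since $\gcd(|G_1^{ab}|,|G_2^{ab}|)=1$, the tensor product is annihilated and the map vanishes.

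Having obtained $Z^*(G_1 \times G_2) = Z^*(G_1) \times Z^*(G_2)$, the set-theoretic identity $(A_1 \times A_2) \cap (B_1 \times B_2) = (A_1 \cap B_1) \times (A_2 \cap B_2)$ applied inside $G_1 \times G_2$ immediately yields the claimed equality of exterior-center subgroups. The capability consequence is then a one-line corollary: a pair $(G,N)$ is capable if and only if $Z^{\wedge}_G(N)=1$ by Ellis's theorem, and a direct product of groups is trivial if and only if each factor is trivial, so $(G_1 \times G_2, N_1 \times N_2)$ is capable exactly when both $(G_1,N_1)$ and $(G_2,N_2)$ are.

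The only delicate point in the whole argument is the verification of the hypothesis of Theorem $2.2$ under the coprimality condition; once this is settled, everything else is either a definitional unfolding or a routine distributivity check. I would also flag that the proof implicitly relies on the non-abelian extension of $Z^{\wedge}_G(N) = Z^*(G) \cap N$, which the paper asserts follows from the abelian case of $[12,\ \mathrm{Corollary}\ 5.3]$ by a similar argument and is taken for granted here.
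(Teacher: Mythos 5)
Your proposal is correct and follows essentially the same route as the paper: rewrite both sides via the identity $Z^{\wedge}_G(N)=Z^*(G)\cap N$, split $Z^*(G_1\times G_2)$ as $Z^*(G_1)\times Z^*(G_2)$ using the coprimality of $|G_1^{ab}|$ and $|G_2^{ab}|$, distribute the intersection over the direct product, and conclude capability from Ellis's criterion. The one point of divergence is how the splitting of $Z^*$ is justified: you verify the hypothesis of Theorem $2.2$ directly (the tensor product $G_1/G_1'\otimes G_2/G_2'$ vanishes by coprimality of exponents), whereas the paper first invokes the K\"unneth-type isomorphism $M(G_1\times G_2)\cong M(G_1)\times M(G_2)$ and then applies its own Theorem $3.1$ for the abelian variety; both arguments rest on the same coprimality computation, and yours is, if anything, slightly more self-contained. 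You were also right to flag the non-abelian extension of $Z^{\wedge}_G(N)=Z^*(G)\cap N$ as an unproved input --- the paper likewise takes it on faith from the abelian case of $[12,\ \mathrm{Corollary}\ 5.3]$.
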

\begin{proof}
Since $(|(G_{1})^{ab}|,|(G_{2})^{ab}|)=1$, we have $M({G_1\times
G_2})\cong
 M({G_1})\times M({G_2})$ and by Theorem $3.1$, $Z^*({G_1\times
G_2})=Z^*({G_1})\times Z^*({G_2})$. Finally using the ??previous note, we conclude that
$$Z^{\wedge}_{{G_1\times G_2}}(N_1\times N_2)=Z^*({G_1\times G_2})\cap
(N_1\times N_2)$$
$$=Z^*({G_1})\times Z^*({G_2})\cap (N_1\times N_2)$$
$$=(Z^*({G_1})\cap N_1)\times (Z^*({G_2})\cap N_2)$$
$$=Z^{\wedge}_{{G_1}}(N_1)\times
Z^{\wedge}_{{G_2}}(N_2).$$
This equation and $[5, Theorem\ 3]$ complete the proof.
\end{proof}

\begin{rem}By induction we can also conclude the above
theorem for a family
of pairs of groups $\{(G_i,N_i)\ |\ 1\leq i\leq n\}$, where
$(G_i)^{ab}$'s have mutually coprime orders, that is, $(\prod_{i=1}^n
G_i,\prod_{i=1}^n N_i)$ is a capable pair of groups if and only if every
$(G_i,N_i)$ is a capable pair of groups.\end{rem}

\begin{exm}
Using $[12, \ Theorem\ 5.4]$ $(\mathbb{Z}_6\times
\mathbb{Z}_6,\mathbb{Z}_3\times \mathbb{Z}_3)$ is a capable pair
of groups, but $(\mathbb{Z}_6, \mathbb{Z}_3)$ is not. This shows
that the condition $(|(G_1)^{ab}|,|(G_2)^{ab}|)=1$ in Theorem $4.2$ can not be omitted.
\end{exm}

\end{document}